\documentclass[11pt,a4,fleqn,english]{amsart}%article}

\usepackage{amsmath}   % many want amsmath extensions
\usepackage{amsfonts,amssymb}
\usepackage[height=22cm , width = 16cm , top = 4cm , left = 3cm, a4paper]{geometry}
\usepackage[a4paper]{geometry}

\usepackage{graphicx} % avoid epsfig or earlier such packages
\usepackage{url}      % for URLs

\usepackage{babel}
\usepackage{latexsym}

\input colordvi

\pagestyle{myheadings} \markright{\today}\pagenumbering{arabic}

\theoremstyle{plain}

\newtheorem{theo}{Theorem}[section]
\newtheorem{definition}[theo]{Definition}%[section]
%[section]
%[section]

\theoremstyle{remark}
\newtheorem{lemma}[theo]{Lemma}%[section]
%[section]
%[section]
\newtheorem{hypp}[theo]{Hypotheses}

\numberwithin{equation}{section}

\title[Uniqueness for coagulation-multiple fragmentation]{On the uniqueness for coagulation and multiple fragmentation equation}
\author{Ankik Kumar Giri}

\thanks{Johann Radon Institute for Computational and Applied Mathematics (RICAM), Austrian Academy of Sciences, 
Altenbergerstrasse\ 69, A-4040 Linz, Austria} \email{ankik.giri@ricam.oeaw.ac.at, ankik.math@gmail.com}

%\date{\today}

%\subjclass{Primary 60H15, 35R30; Secondary \Red{??}46G10, 28B05}

\begin{document}
 \baselineskip 16pt \markright{Text}

\thispagestyle{empty}

%%%%%%%%%%%%%%%%%%%%%%%%%%%% Einleitung %%%%%%%%%%%%%%%%%
 %\hrule \vskip 8pt

\begin{abstract}
In this article, the uniqueness of weak solutions to the continuous coagulation and multiple fragmentation equation is proved for a 
large range of unbounded coagulation and multiple fragmentation kernels. The multiple fragmentation kernels may have a singularity at origin. This work generalizes the preceding ones, by including some physically relevant coagulation and fragmentation kernels which were not considered before.
\end{abstract}

\maketitle

{\rm \bf Mathematics subject classification (2010):} 45J05, 34A34, 45G10
\\
{\rm \bf Key-words:} Particles; Coagulation; Multiple Fragmentation; Higher moments; Uniqueness.
%\vspace{1cm}
%\tableofcontents
%
%
%
 \thispagestyle{empty}

\section{Introduction}\label{intro}

We analyze the following continuous coagulation and multiple fragmentation equation 
\begin{align}\label{cfe1}
\frac{\partial f(x,t)}{\partial t}  = & \frac{1}{2}\int_{0}^{x} K(x-y,y)f(x-y,t)f(y,t)dy - 
\int_{0}^{\infty} K(x,y)f(x,t)f(y,t)dy\nonumber\\  
&+ \int_{x}^{\infty} b(y,x)S(y)f(y,t)dy-S(x)f(x,t),
\end{align}
with
\begin{eqnarray}\label{in1}
f(x,0) = f_{0}(x)\geq 0,
\end{eqnarray}

where $f(x,t)$ is the number density of particles of size $x \in \mathbb{R}_{>0}:=(0,\infty)$ at time $t\geq0$. 
The first two integrals on the right-hand side of (\ref{cfe1}) describe, 
respectively, the birth and death of particles of size $x$ due to the coagulation 
process. The coagulation kernel $K(x,y)$ represents the rate at which particles of size 
$x$ coalesce with particles of size $y$. The remaining two integrals on the right-hand side of (\ref{cfe1}), 
due to the fragmentation process, can be interpreted similarly. Here the particles can fragment into more than two 
pieces. The breakage function $b(x,y)$ is the 
probability density function for the formation of particles of size $y$ from 
the breakage of particles of size $x$. Moreover, it is assumed that $b$ is non-negative measurable function which satisfies 
$b(x,y)=0$ for  $x< y$ and $S$ is the fragmentation rate. The fragmentation rate $S$ and 
breakage function $b$ can be expressed in terms of the multiple-fragmentation 
kernel $\Gamma$ as follows
\begin{eqnarray}\label{Selection rate defi1}
S(x)=\int_{0}^{x}\frac{y}{x}\ \Gamma(x,y)dy,\ \ \ \ b(x,y)=\Gamma(x,y)/ S(x).
\end{eqnarray}
%The following conditions on $b$ are required to prove the existence of weak solutions in \cite{Giri:2012}
%\begin{eqnarray}\label{N1}
%\sup_{y \in ]0,R[}\int_{0}^{y}b(x,y)dx < \infty, \ \ \mbox{for all}\ \ R>0,
%\end{eqnarray}

The number of fragments obtained from the breakage of particles of size $y$,
\begin{eqnarray}\label{N1}
\int_{0}^{y}b(y,x)dx = N< \infty, \ \ \mbox{for all}\ \ y\in \mathbb{R}_{>0},
\end{eqnarray}
and the necessary condition for mass conservation is
\begin{eqnarray}\label{mass1}
\int_{0}^{y}xb(y,x)dx = y\ \ \text{for all}\ \ y\in \mathbb{R}_{>0}.
\end{eqnarray}

The typical question of existence and uniqueness of solutions to coagulation-fragmentation equation has already been discussed 
extensively in several articles for example \cite{Aizenman:1979, Ball:1990, Costa:1995, Dubovski:1996, Escobedo:2003, Giri:2011II, Laurencot:2004, Mischler:2002, Stewart:1990I, Stewart:1990II}. However, most of these articles are dedicated to the case of 
binary fragmentation. The case of multiple fragmentation was not considered up to that level. Best to our knowledge, the first 
existence and uniqueness of solutions to the continuous coagulation and multiple fragmentation equation was proved 
in \cite{Melzak:1957} for bounded coagulation and fragmentation kernels $K$ and $F$ respectively. By using a different 
(semigroup) approach, a similar result was acquired in \cite{McLaughlin:1997II}. In the similar contest, a more relaxed result 
on existence and uniqueness of solutions to (\ref{cfe1})-(\ref{in1}) is shown in \cite{Lamb:2004} when $S$ satisfies almost a linear growth but 
still with a bounded $K$. The case of unbounded coagulation and fragmentation kernels is later considered in 
\cite{Laurencot:2000, Giri:2011I, Giri:2012, Banasiak:2012} where the existence of solutions to (\ref{cfe1})-(\ref{in1}) is demonstrated 
under different growth conditions on coagulation and fragmentation kernels. In \cite{Laurencot:2000}, the coagulation kernel $K$ 
of the type $K(x,y)=r(x)r(y)$ with no growth restriction on $r$ is assumed with a reasonable growth condition on multiple fragmentation 
kernel $\Gamma$. In \cite{Giri:2011I}, the coagulation kernel $K$ satisfying $K(x,y)\leq \phi(x)\phi(y)$ for some sub-linear 
function $\phi$ and a reasonable growth condition on $\Gamma$ are assumed to show the existence of solutions. Still the fragmentation 
kernel $\Gamma$ is required to be bounded near origin in \cite{Laurencot:2000, Giri:2011I} which thus excludes classical fragmentation 
kernels discussed in the literature such as $\Gamma(y,x)=(\alpha+2)x^{\alpha}y^{\gamma-(\alpha+1)}$ with $\alpha>-2$ and 
$\gamma\in \mathbb{R}$, see \cite{McGradyZiff:1987}. Later, this type of fragmentation kernels are partially included 
in \cite{Giri:2012} which 
extends the previous results with the same growth conditions on the coagulation kernel $K$ as in \cite{Giri:2011I}. These types of 
kernels are also recently discussed in \cite{Banasiak:2012} but with an assumption of finiteness on higher moments.
However, the uniqueness of solutions to (\ref{cfe1})-(\ref{in1}) for unbounded coagulation and fragmentation kernels
is only shown in \cite{Giri:2011I} which does not include most of the physically relevant coagulation and fragmentation kernels considered in the existence result \cite{Giri:2011I, Giri:2012}. 

The purpose of this work is to prove the uniqueness of solutions to (\ref{cfe1})-(\ref{in1}) which can cover this gap partially. Let us 
briefly outline the manuscript. In this section, we give some hypotheses, notation of spaces, definitions  and state the main result in 
Theorem \ref{unique theorem} to demonstrate the uniqueness of solutions. A few examples of unbounded coagulation and multiple fragmentation
kernels are also given from \cite{Giri:2012} at the end of the section which are covered in the present work. Finally, Theorem \ref{unique theorem} is proved by 
showing the integrability of higher moments in Theorem \ref{integrability thm} in Section \ref{uni}. The integrability of some higher moments had been previously proved in \cite{Carr:1992} for sublinear coagulation kernels in the discrete coagulation-fragmentation equations. An inspiration to complete this work came from 
\cite{Stewart:1990II, Costa:1995, Giri:2011II}.

In particular, we make the following hypotheses on the kernels.
\begin{hypp}
 (A1) $K$ is a non-negative measurable function on $\mathbb{R}_{\geq 0} \times \mathbb{R}_{\geq 0}$ 
and is symmetric, i.e. $K(x,y)=K(y,x)$ for all $x,y \in \mathbb{R}_{> 0}$,

(A2) $K(x,y)\leq \phi(x)\phi(y)$ for all $x,y\in \mathbb{R}_{> 0}$ 
where $\phi(x)\leq k_1(1+x)^{\mu}$ for some $0\leq \mu < 1$ and constant $k_1>0$.

(A3) $\Gamma$  is a non-negative measurable function on $\mathbb{R}_{> 0} \times \mathbb{R}_{> 0}$ such that $\Gamma(x,y)=0$ if $0<x<y$. 
Defining $S$ and $b$ by (\ref{Selection rate defi1}), we assume that $b$ satisfies  (\ref{N1})-(\ref{mass1}).

(A4) $S:\mathbb{R}_{> 0}\to \mathbb{R}_{\geq 0}$ is a measurable function. For all $x>0$, there exist a constant $m>0$ such that $S(x)\leq m_1(1+x)^{1-\lambda}$, where $\lambda \in ]0,1[$.

(A5) There are constants $L>0$ and $1+\nu>0$ such that
\begin{eqnarray*}
 \Gamma(x,y)=b(x,y)\cdot S(x) \geq R(x):=L(1+x)^{\nu} \hspace{.2cm} \mbox{for any} \hspace{.2cm} x\geq 1 \hspace{.2cm}  \mbox{and} \hspace{.2cm} y\in (0,x).
\end{eqnarray*}
\end{hypp}

% %
Let $X$ be the following Banach space with norm $\|\cdot\|$
\begin{eqnarray*}
X=\{f\in L^1]0,\infty[:\|f\|< \infty\}\ \ \mbox{where}\ \ \|f\|=\int_{0}^{\infty}(1+x)|f(x)|dx
\end{eqnarray*}
and set
\begin{eqnarray*}
X^+=\{f\in X: f\geq 0 \ \ a.e.\}.
\end{eqnarray*}

The main result of this work is the following uniqueness result:
\begin{theo}\label{unique theorem}
Let $f$ be a solution of equation (\ref{cfe1})-(\ref{in1}) with initial data $f_0 \in X^+$. 
If (A1)-(A5) and $1+\nu>\mu$ hold, then the solution is unique.
\end{theo}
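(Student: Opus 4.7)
The plan is to adapt the Stewart-type weighted $L^1$ contraction method of \cite{Stewart:1990II, Giri:2011II}, together with the higher-moment integrability furnished by Theorem \ref{integrability thm}. Suppose $f_1, f_2\in X^+$ are two solutions of (\ref{cfe1})--(\ref{in1}) sharing the initial datum $f_0$, and set $g := f_1 - f_2$. Linearising the quadratic coagulation term via $f_1(x-y)f_1(y) - f_2(x-y)f_2(y) = g(x-y)f_1(y) + f_2(x-y)g(y)$, the function $g$ satisfies a linear equation with coefficients built from $f_1+f_2$. The goal is a Gronwall bound $U'(t) \leq C(t)\,U(t)$ for
\[
U(t) \;:=\; \int_0^\infty (1+x)\,|g(x,t)|\,dx,
\]
with $C \in L^1_{\mathrm{loc}}(0,\infty)$; combined with $U(0) = 0$, this forces $U\equiv 0$ and hence $f_1=f_2$.

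To derive such a bound I would multiply the equation for $g$ by $(1+x)\operatorname{sgn}(g(x,t))$ --- rigorously by a smooth approximation to the signum, later passed to the limit --- and integrate in $x$. The coagulation part, after the substitution $(x,y)\mapsto (x+y,y)$ in the birth term, Fubini, symmetrisation in $K$, and the pointwise inequality $\operatorname{sgn}(g(z))\,g(x)\leq|g(x)|$, contributes an upper bound of the form $C\!\int\!\!\int K(x,y)(1+x+y)(f_1+f_2)(y)\,|g(x)|\,dy\,dx$. Using (A2) and $(1+x+y)\leq (1+x)+(1+y)$, this is dominated by
\[
C\!\!\int\!(1+x)^{1+\mu}|g|\,dx\cdot\!\!\int\!(1+y)^{\mu}(f_1+f_2)\,dy \;+\; C\!\!\int\!(1+x)^{\mu}|g|\,dx\cdot\!\!\int\!(1+y)^{1+\mu}(f_1+f_2)\,dy.
\]
Since $\mu<1$, the $\mu$-moment of $f_i$ is controlled by $\|f_i\|$ and $\int(1+x)^\mu|g|\,dx\leq U(t)$, while the higher moment $\int(1+y)^{1+\mu}(f_1+f_2)\,dy$ is exactly what Theorem \ref{integrability thm} provides under $1+\nu>\mu$. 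The fragmentation part, after swapping integration order in the birth and invoking $\int_0^y(1+x)\,b(y,x)\,dx = N+y$ from (\ref{N1})--(\ref{mass1}), telescopes with the fragmentation death $-\int S(x)(1+x)|g|\,dx$ into a residual $(N-1)\int S(y)|g(y)|\,dy$, bounded by $C\,U(t)$ thanks to (A4).

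What remains is to dominate $\int(1+x)^{1+\mu}|g|\,dx$ linearly by $U(t)$. I would split at a cut-off $R\geq 1$: the truncated piece on $[0,R]$ is bounded by $(1+R)^\mu U(t)$, while the tail on $(R,\infty)$ is controlled using $|g|\leq f_1+f_2$ together with the moment supplied by Theorem \ref{integrability thm}, and absorbed by the strong fragmentation damping of (A5), which yields an effective weight of order $(1+x)^{2+\nu}$ on the tail --- the condition $1+\nu>\mu$ translates to $2+\nu > 1+\mu$ and so makes this absorption possible. Putting the pieces together delivers $U'(t)\leq C(t)\,U(t)$, and Gronwall concludes. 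The main obstacle is precisely this tail-absorption step: because mass conservation (\ref{mass1}) consumes most of the negative fragmentation contribution through cancellation with the fragmentation birth, only the surplus exposed by the strict inequality $1+\nu>\mu$ --- combined with the higher-moment bounds from Theorem \ref{integrability thm} --- remains to balance the unbounded coagulation growth, and all manipulations must respect the possibly singular behaviour of $b$ at the origin allowed by (A3).
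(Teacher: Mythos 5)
Your overall strategy (weighted $L^1$ estimate for $Y=f_1-f_2$, Gronwall, and the higher moments from Theorem \ref{integrability thm} to control the unbounded coagulation kernel) is the same as the paper's, and your treatment of the fragmentation terms via (\ref{N1})--(\ref{mass1}) matches the paper's cancellation of the $x$-weighted fragmentation birth against the death term. However, there is a genuine gap in your treatment of the coagulation part, and it sits exactly where you yourself locate "the main obstacle."

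Your coagulation bound carries the factor $(1+x+y)$ against $(f_1+f_2)(y)\,|g(x)|$, and after splitting $(1+x+y)\le(1+x)+(1+y)$ and using (A2) you are left with $\int(1+x)^{1+\mu}|g|\,dx$, which cannot be dominated by $C\,U(t)$ since $1+\mu>1$. Your proposed repair --- cut off at $R$ and absorb the tail by the fragmentation damping coming from (A5) --- does not close. First, the negative term $-\int(1+x)S(x)|g|\,dx$ is not available for absorption: as your own fragmentation computation shows, its $x$-weighted part is consumed \emph{exactly} by the mass-conservation identity (\ref{mass1}) in cancelling the fragmentation birth (there is no "surplus"; $\int_0^y x\,b(y,x)\,dx=y$ is an equality), and what survives, $(N-1)\int S|g|\,dy$, has the wrong sign to damp anything. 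Second, if instead you estimate the tail by $|g|\le f_1+f_2$ and the finite $(2+\nu-\delta)$-moment, you get an additive error $\epsilon_R(t)$ not proportional to $U$, while the Gronwall coefficient grows like $(1+R)^\mu$; the resulting bound $U(t)\le\big(\int_0^t\epsilon_R\big)\exp\big(c\,t(1+R)^\mu\big)$ does not tend to zero as $R\to\infty$. The missing idea is the sharper sign inequality the paper uses: writing $r(x,y,s)=(1+x+y)\mathrm{sgn}(Y(x+y,s))-(1+x)\mathrm{sgn}(Y(x,s))-(1+y)\mathrm{sgn}(Y(y,s))$ and exploiting the \emph{exact} identity $-\mathrm{sgn}(Y(x,s))Y(x,s)=-|Y(x,s)|$ for the coagulation death term gives $r(x,y,s)Y(x,s)\le 2(1+y)|Y(x,s)|$, i.e.\ the entire superlinear weight lands on the known solution factor (whose $(1+\mu)$-moment is integrable by Theorem \ref{integrability thm} precisely because $1+\mu<2+\nu$), while $|Y|$ only ever carries a weight $(1+x)^{\mu}\le(1+x)$ and is thus controlled by $U(t)$. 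With that inequality the problematic term never appears and no absorption is needed. A secondary point: you should also run the argument on the truncated quantity $u^n(t)=\int_0^n(1+x)|Y|\,dx$ and pass $n\to\infty$, since the formal differential inequality on $(0,\infty)$ requires justifying the Fubini and limit interchanges that the truncation (and the boundary terms it generates) makes rigorous.
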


The $r$th moment of the number density distribution $f$ if it exists is defined by
\begin{eqnarray*}
M_{r}(t)= M_{r}(f(t)):= \int_{0}^{\infty} x^{r}f(x,t)dx, \ \ r\geq 0.
\end{eqnarray*}
The first two moments represent some important properties of the distribution. 
The zeroth ($r=0$) and first ($r=1$) moments are proportional to the total 
number and the total mass of particles, respectively.

\begin{definition}\label{def1} Let $T \in ]0,\infty]$. A solution $f$ of (\ref{cfe1})-(\ref{in1}) is a non-negative function $f: [0,T[\to X^+$ such that, for a.e. $x\in ]0,\infty[$ and all $t\in [0,T[$,

      (i) $s\mapsto f(x,s)$ is continuous on $[0,T[$,

     (ii)  the following integrals are finite
     \begin{align*}
     \int_{0}^{t}\int_{0}^{\infty}K(x,y)f(y,s)dyds<\infty\ \ and \ \ \int_{0}^{t}\int_{x}^{\infty}b(y,x)S(y)f(y,s)dyds<\infty,
     \end{align*}

      (iii)  the function $f$ satisfies the following weak formulation of (\ref{cfe1}){-(\ref{in1})}
     \begin{align*}
     f(x,t)&=f_0(x)+\int_{0}^{t}\left\{ \frac{1}{2}\int_{0}^{x}K(x-y,y)f(x-y,s)f(y,s)dy \right.\nonumber\\
     &\left. -\int_{0}^{\infty}K(x,y)f(x,s)f(y,s)dy +\int_{x}^{\infty}b(y,x)S(y)f(y,s)dy - S(x)f(x,s)\right\}ds.
     \end{align*}
\end{definition}

Let us mention the following coagulation kernels discussed in \cite{Banasiak:2012, Giri:2011I, Giri:2012} to show the existence 
of solutions to (\ref{cfe1})-(\ref{in1}). We should point out these kernels will satisfy the hypotheses (A1) and (A2). 

$(1)$ Shear kernel (non-linear velocity profile), see \cite{Aldous:1999, Smit:1994} 
\begin{eqnarray*}
 K(x,y)=k_0(x^{1/3}+y^{1/3})^{7/3}.
\end{eqnarray*}
$(2)$ The modified Smoluchowski kernel, see \cite{Koch:2007}, 
\begin{eqnarray*}
 K(x,y)=k_0\frac{(x^{1/3}+y^{1/3})^2}{x^{1/3}\cdot y^{1/3}+c}, \ \ c>0.
\end{eqnarray*}
The third example of coagulation kernel mentioned in \cite{Giri:2011I} is bounded at infinity and 
therefore is already covered in the previous work. It is also clear that the coagulation kernels satisfying 
$K(x,y)\leq x^{\mu_1}y^{\mu_2}+x^{\mu_2}y^{\mu_1}$ for some $\mu_1, \mu_2 \in [0,1[$ which are usually used in the mathematical 
literature satisfy (A1)-(A2).
% \begin{remark}
%  The classical shear kernel (with linear velocity profile), see \cite{Aldous:1999}, $K(x,y)=k_0(x^{1/3}+y^{1/3})^3$, and the 
% multiplicative coagulation kernel i.e.\ $K(x,y)=xy$ also satisfy (A1)-(A2) and thus will also be included in this uniqueness result. However, the existence of solutions to (\ref{cfe1}) is still an open problem for these cases. 
% \end{remark}

Now let us take the following example of multiple fragmentation kernels partially considered in \cite{Giri:2012}. 
 \begin{eqnarray}
S(y) =y^\gamma \ \ \text{and}\ \ b(y,x) = \frac{\alpha+2}{y} \left( \frac{x}{y} \right)^\alpha\ \ \text{for}\ \ 0<x<y, \label{volvic}
\end{eqnarray}
 where  $\gamma \in \mathbb{R}$ and $\alpha > -2$, see \cite{McGradyZiff:1987,Peterson:1986}. Since this breakage function has a 
physical meaning only if $-2<\alpha\leq 0$. At $\alpha=0$, this gives us the case of binary fragmentation.
 Assume that $0<\gamma< 1$ and $-1< \alpha\leq 0$. (\ref{N1})-(\ref{mass1}) are clearly satisfied with 
$N=(\alpha+2)/(\alpha+1)$ and
\begin{eqnarray*}
 \Gamma(y,x)= (\alpha+2) x^{\alpha} y^{\gamma-(\alpha+1)} \ \ \mbox{for}\ \ 0<x<y.
\end{eqnarray*} 
First, to check (A5), we interchange the roles of $x$ and $y$ to obtain 
\begin{align*}
\Gamma(x,y)&= (\alpha+2) y^{\alpha} x^{\gamma-(\alpha+1)} \ \ \mbox{for}\ \ 0<y<x\\
&=(\alpha+2)x^{\gamma-1}(\frac{y}{x})^{\alpha}\\
 &\geq (\alpha+2)(1+x)^{\gamma-1}\\
 &=:L(1+x)^{\nu} \ \ \mbox{where}\ \ \nu=\gamma-1>-1\ \ \mbox{and}\ \ L=\alpha+2.
 \end{align*} 
 This shows that (A5) is satisfied. The hypothesis (A4) is obvious. 
%Next, we estimate 
% \begin{align*}
%  \int_{0}^{x}(1+y)b(y,x)dy &= \frac{\alpha+2}{x^{\alpha+1}}\int_{0}^{x}(1+y)y^{\alpha}dy\\
% &=\frac{\alpha+2}{x^{\alpha+1}}\bigg[\frac{x^{\alpha+1}}{\alpha+1}+\frac{x^{\alpha+2}}{\alpha+2}\bigg]\\
% &=\frac{\alpha+2}{\alpha+1}\bigg[1+\frac{\alpha+1}{\alpha+2}x\bigg]\\
% & \leq \frac{\alpha+2}{\alpha+1} (1+x)^{\frac{\alpha+1}{\alpha+2}}\\
% &\leq m_2(1+x)^{a_2}\ \ \mbox{where} \ \ a_2=\frac{\alpha+1}{\alpha+2}\ \  \mbox{and}\ \ m_2=\frac{1}{a_2}.
% \end{align*}

\section{Uniqueness}\label{uni}

To prove the Theorem \ref{unique theorem}, we need to prove the following theorem motivated by \cite{Carr:1992, Giri:2011II}.

% \begin{hypp}
%  Let us assume that (A1), (A2), (A3), (A4) and (A5) hold. Instead of (A4), we consider more general condition which is as follows:
% 
% (A4') For all $x>0$, there exists $m>0$ such that 
% \begin{eqnarray*}
%       S(x)\leq m_1(1+x)^{\gamma} \hspace{.2cm} \mbox{for some} \hspace{.2cm} 0 \leq \gamma \leq \mu.
%      \end{eqnarray*}
%  
% \end{hypp}

\begin{theo}\label{integrability thm} 
Let us assume that (A1)-(A5), and $1+\nu > \mu$ hold. 
%Instead of (A4), we assume that a more general growth condition on $S$
%(A4') For all $x>0$, there exists $m>0$ such that 
%\begin{eqnarray*}
  %    S(x)\leq m(1+x)^{1-\lambda} \hspace{.2cm} \mbox{for some} \hspace{.2cm} \lambda \in [0,1[,
    % \end{eqnarray*} 
%hold.
Let $f\in X^+$ be any solution of equation (\ref{cfe1})-(\ref{in1}) on $[0,T[$, 
$T>0$. Then, for every $t\in [0,T[$ and for every $\delta>0$, $I_{2+\nu-\delta}(t)< \infty$, where $I_r(t):=\int_{0}^{t}M_r(f(s))ds$.
\end{theo}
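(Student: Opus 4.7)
The plan is to exploit a sign miracle at sub-linear moments: for any $r\in(0,1)$ the map $x\mapsto x^r$ is sub-additive, so the coagulation contribution to $\frac{d}{dt}M_r$ is non-positive, while the fragmentation contribution is not only non-negative but, thanks to (A5), coercive in a high moment. Writing $\mathrm{coag}(r)(t)=\tfrac12\int\!\!\int[(x+y)^r-x^r-y^r]K(x,y)f(x,t)f(y,t)\,dxdy\leq 0$ and, using $\int_0^y xb(y,x)\,dx=y$, rewriting the fragmentation contribution as $\mathrm{frag}(r)(t)=\int_0^\infty f(y,t)\int_0^y(x^r-y^{r-1}x)\Gamma(y,x)\,dxdy$, the elementary bound $x^r-y^{r-1}x\geq(2^{1-r}-1)xy^{r-1}$ on $0<x\leq y/2$ combined with (A5) yields
\begin{equation*}
\mathrm{frag}(r)(t)\geq c_r\int_1^\infty y^{r+1+\nu}f(y,t)\,dy\qquad(c_r>0).
\end{equation*}
Setting $r=1-\delta$ the coercive quantity is exactly $M_{2+\nu-\delta}$ restricted to $\{y\geq 1\}$, which is precisely the integrated object we need to bound.

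Before running the main estimate I will collect some free moment bounds. Testing the weak formulation against $\varphi\equiv x$ gives mass conservation $M_1(t)=M_1(0)$; applying a Gronwall argument to $\dot M_0\leq (N-1)\int Sf\leq Cm_1[M_0+M_1]$ (using (A4)) shows $M_0(t)$ is bounded on $[0,T[$. Consequently $\|f(t)\|_X=M_0+M_1$ is uniformly bounded, and every $M_q(t)$ with $q\in[0,1]$ is bounded on compact time intervals. In addition, Definition~\ref{def1}(ii) together with (A2) yields $I_\mu(t)<\infty$. Integrating the weak moment equation over $[0,t]$ for $r\in(0,1)$ and using $\mathrm{coag}(r)\leq 0$ and $M_r(t)\geq 0$, we obtain
\begin{equation*}
c_r\,I_{r+1+\nu}\big|_{[1,\infty)}(t)\leq M_r(t)+\int_0^t|\mathrm{coag}(r)|(s)\,ds.
\end{equation*}

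The final task is to absorb the coagulation contribution. Using (A2) one has $|\mathrm{coag}(r)|(t)\leq C[M_0+M_\mu](t)[M_r+M_{r+\mu}](t)$; the first bracket and $M_r$ are bounded on $[0,T[$, so only $M_{r+\mu}$ is potentially dangerous. Here the hypothesis $1+\nu>\mu$ is what makes everything work, because it guarantees $\mu<r+\mu<r+1+\nu$, so H\"older interpolation with $\theta=r/(r+1+\nu-\mu)\in(0,1)$ followed by Young's inequality produces $M_{r+\mu}(t)\leq C_\varepsilon M_\mu(t)+\varepsilon M_{r+1+\nu}(t)$ for any $\varepsilon>0$. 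Substituting, splitting $I_{r+1+\nu}=I_{r+1+\nu}|_{[0,1]}+I_{r+1+\nu}|_{[1,\infty)}$ and dominating the low-range piece by $T\sup_{[0,T]}M_0$, yields an inequality of the form $(c_r-C\varepsilon)I_{r+1+\nu}|_{[1,\infty)}(t)\leq C'(T)$; choosing $\varepsilon<c_r/C$ closes the estimate and gives $I_{r+1+\nu}(t)<\infty$. Taking $r=1-\delta$ for any $\delta\in(0,1)$ proves the theorem, and $\delta\geq 1$ is immediate from $x^{2+\nu-\delta}\leq 1+x^{r+1+\nu}$ with some $r\in(0,1)$.

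The main obstacle I foresee is rigorously justifying the integrated moment identity on which everything rests, since the raw coagulation and fragmentation integrands are not \emph{a priori} absolutely integrable under only the standing hypotheses. I will handle this by running the entire argument with the truncated test function $x^r\wedge n$ in place of $x^r$, extracting bounds uniform in $n$, and then passing to the limit $n\to\infty$ by monotone convergence and Fatou's lemma; this truncation will also sidestep any formal $\infty-\infty$ cancellation in the absorption step.
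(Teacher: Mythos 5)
Your overall architecture is sound and genuinely different from the paper's, but there is one step that fails as written: the truncation you propose to justify the absorption. With the test function $x^r\wedge n$ (constant equal to $n$ for $x>N:=n^{1/r}$), the fragmentation term loses its coercivity exactly where you need it matched against the coagulation loss. For $y>N$ one can still show $\int_0^y(x^r\wedge n)\,b(y,x)\,dx-n\geq 0$ (since $\int_0^y\min(x,N)b(y,x)\,dx\geq N$ by (\ref{mass1})), but there is no lower bound of the form $c_r\,y^{r+1+\nu}$ on that range; meanwhile the coagulation loss on the same range is only bounded by $C\int_{N}^{\infty}n(1+x)^{\mu}f\,dx\lesssim\int_N^\infty x^{r+\mu}f\,dx$, which for $r+\mu>1$ (i.e.\ $\mu>\delta$) is neither a priori finite nor dominated by $\varepsilon\int_1^N x^{r+1+\nu}f\,dx$. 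So the bound you extract is not uniform in $n$ and the absorption does not close. The repair is to truncate the domain rather than the test function: multiply the weak formulation by $x^r\mathbf{1}_{(0,n)}(x)$ (this is exactly the paper's identity (\ref{1st eq lambda})). Then the coercive term is $c_r\int_1^n x^{r+1+\nu}f\,dx$, the coagulation loss is controlled by $\int_0^n x^r(1+x)^\mu f\,dx$, and the pointwise Young inequality $x^{r+\mu}\leq C_\varepsilon x^{\mu}+\varepsilon x^{r+1+\nu}$ (valid because $\mu<r+\mu<r+1+\nu$, which is where $1+\nu>\mu$ enters) applies on exactly the same truncated range $[1,n]$; every quantity is finite for fixed $n$, the $\varepsilon$-term absorbs, and monotone convergence in $n$ finishes. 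Your preliminary claim of mass conservation is also asserted rather than proved, but you only need local boundedness of $\|f(t)\|$, which the paper itself assumes implicitly, so I would not press that point.

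Once repaired, your route is a real alternative to, and in one respect more economical than, the paper's. The paper never absorbs: its Lemma \ref{lemma for integrability} controls the coagulation loss by an already-known moment ($\lambda+\mu\leq\rho$ with $I_\rho<\infty$ given), concludes $I_{\lambda+1+\nu}<\infty$, and then iterates, gaining $1+\nu-\mu>0$ in the moment index at each step until $\rho-\mu\geq1$. Your interpolation-plus-absorption collapses that entire bootstrap into a single application with $r=1-\delta$, at the price of the more delicate uniform-in-$n$ bookkeeping above. The coercivity mechanism itself is identical in both arguments (your lower bound $c_ry^{r+1+\nu}$ on $\{y\geq1\}$ is the paper's $R_x\geq\frac{L}{2(\lambda+1)}x^{\nu+\lambda+1}$); the only cosmetic difference is that you pair the loss term $y^rS(y)$ with the gain via (\ref{mass1}), whereas the paper bounds $\int x^{\lambda}S(x)f\,dx$ separately using (A4).
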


Theorem \ref{integrability thm} can be proved by applying a repeated application of the following Lemma:

\begin{lemma} \label{lemma for integrability}
 Assume (A1)-(A5) and $1+\nu > \mu$ hold. Let $f\in X^+$ be any solution of equation (\ref{cfe1})-(\ref{in1}) 
on $[0,T[$, $T>0$, and assume $I_{\rho}(t)< \infty$ for all $t \in[0,T[$ and some $\rho \geq 1$ with $\rho>\mu$. 
Then, for all $t \in[0,T[$, $I_{\rho + \nu - \mu+1}(t)< \infty$ if $\rho-\mu<1$. In case $\rho-\mu \geq 1$ we obtain 
$I_{2+\nu-\delta}(t)<\infty$ for any $\delta>0$.
\end{lemma}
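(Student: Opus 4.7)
Proof plan. The plan is to derive an integrated moment identity by testing \eqref{cfe1} against a truncated weight $\theta_N(x)=x^r\mathbf{1}_{\{x\le N\}}$ for a well-chosen exponent $r\in(1,2)$, extract a fragmentation dissipation of order $r+1+\nu$ using (A5), bound the coagulation gain by moments controlled by $I_\rho(t)<\infty$ together with mass conservation, and pass to the limit $N\to\infty$.

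First I would multiply the weak form of \eqref{cfe1} by $\theta_N$ and integrate in $s\in[0,t]$ to obtain an identity of the shape
\[
M_r^N(t) + \mathrm{F}^N(t) \;=\; M_r^N(0) + \mathrm{C}^N(t),
\]
where $\mathrm{F}^N(t)\ge 0$ (by convexity of $x\mapsto x^r$ for $r>1$) is the fragmentation dissipation and $\mathrm{C}^N(t)\ge 0$ collects the coagulation contribution. Restricting to $y\ge 1$ and $x\le y/2$, applying (A5) and the elementary bound $y^{r-1}-x^{r-1}\ge(1-2^{1-r})y^{r-1}$ would yield
\[
\mathrm{F}^N(t) \;\ge\; c_r\int_0^t\!\!\int_1^{N/2} y^{r+1+\nu}f(y,s)\,dy\,ds
\]
with $c_r>0$ proportional to $r-1$. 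In parallel, the algebraic inequality $(x+y)^r-x^r-y^r\le C_r(xy^{r-1}+x^{r-1}y)$ valid for $r\in[1,2]$, together with (A2) and the subadditivity $(1+z)^\mu\le 1+z^\mu$, provides
\[
\mathrm{C}^N(t) \;\le\; C\int_0^t \bigl(M_0+M_1+M_{1+\mu}\bigr)(s)\,\bigl(M_0+M_{r-1}+M_{r-1+\mu}\bigr)(s)\,ds.
\]

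Next I would select $r$ case by case. If $\rho-\mu<1$, take $r=\rho+1-\mu\in(1,2)$; then $r-1+\mu=\rho$ and all moments in the second factor of the coagulation bound are dominated by $M_0+M_\rho$ via $z^a\le 1+z^\rho$ for $a\in[0,\rho]$, while $1+\mu<r+1+\nu$ (thanks to the hypothesis $1+\nu>\mu$) allows $M_{1+\mu}$ in the first factor to be dominated by $M_0+M_{r+1+\nu}$, which is precisely the moment being controlled by the fragmentation dissipation. If instead $\rho-\mu\ge 1$, take $r=2-\delta'$ with $\delta'>0$ small so that $r+1+\nu\ge 2+\nu-\delta$; here $1+\mu\le\rho$ (case hypothesis) yields $M_{1+\mu}\le M_0+M_\rho$ directly, and similarly for all other exponents appearing in $\mathrm{C}^N$. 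In both cases I would then use that $M_0$ and $M_1$ are uniformly bounded in $s\in[0,T[$ (mass conservation for kernels with $\mu<1$), absorb the cross-term involving $M_{r+1+\nu}(s)$ on the right-hand side into the dissipation on the left by a Young-type inequality, and deduce a bound on $I_{r+1+\nu}^N(t)$ uniform in $N$. Passing $N\to\infty$ by monotone convergence yields $I_{r+1+\nu}(t)<\infty$, whence the comparison $z^a\le 1+z^b$ for $0\le a\le b$ delivers $I_{\rho+\nu-\mu+1}(t)<\infty$ in case $\rho-\mu<1$, and $I_{2+\nu-\delta}(t)<\infty$ in case $\rho-\mu\ge 1$.

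The hardest step will be the closure in the first case: the coagulation bound produces a product $M_{1+\mu}(s)\cdot M_\rho(s)$ where $M_{1+\mu}$ is controlled only by the dissipation exponent itself, so naively this is circular. I expect to arrange the estimate via Young's inequality with a small parameter $\varepsilon$ so that only $\varepsilon\,M_{r+1+\nu}(s)$ reappears on the right (to be absorbed), with the residual bound requiring only the $L^1_s$-integrability of $M_\rho$ provided by the hypothesis $I_\rho(t)<\infty$. A secondary difficulty lies in the passage from the truncated weak formulation to the limit $N\to\infty$, and in the rigorous justification of Fubini's theorem in the various moment integrals given only the regularity of $f$ from Definition \ref{def1}.
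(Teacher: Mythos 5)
Your structural reading of the two operators is correct for a superlinear weight $x^r$, $r\in(1,2)$: fragmentation is then dissipative (by \eqref{mass1} and $x\le y$) with the lower bound of order $y^{r+1+\nu}$ you state, and coagulation is a gain controlled by $(x+y)^r-x^r-y^r\le C_r(xy^{r-1}+x^{r-1}y)$. But the plan does not close, for two concrete reasons. First, the identity $M_r^N(t)+\mathrm{F}^N(t)=M_r^N(0)+\mathrm{C}^N(t)$ has the term $M_r^N(0)=\int_0^N x^r f_0(x)\,dx$ on the right, and with $r>1$ this is not bounded uniformly in $N$: the only assumption is $f_0\in X^+$, i.e.\ finite zeroth and first moments, and no higher moment of the initial datum is available (this is precisely why the lemma is phrased in terms of the time-integrated quantities $I_r$ rather than pointwise moments). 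Second, and more seriously, your coagulation bound is a \emph{product} of two moments at the same time $s$, essentially $M_{1+\mu}(s)\cdot M_{r-1+\mu}(s)$. In the case $\rho-\mu<1$ (so $r-1+\mu=\rho$ and $1+\mu>\rho$), the hypothesis gives only $\int_0^t M_\rho(s)\,ds<\infty$; after replacing $M_{1+\mu}$ by $M_0+M_{r+1+\nu}$ the offending term is $\int_0^t M_{r+1+\nu}(s)M_\rho(s)\,ds$, and no Young-type splitting reduces this to $\varepsilon\int_0^t M_{r+1+\nu}(s)\,ds$ plus something integrable, because $M_\rho(s)$ is not known to be bounded in $s$ (only $L^1$ in time). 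Even in the case $\rho-\mu\ge 1$ you would need $\int_0^t M_\rho(s)^2\,ds<\infty$, which does not follow from $I_\rho(t)<\infty$. So the step you yourself flag as the hardest is in fact an unfixable circularity within this scheme.

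The paper's proof avoids both problems by going in the opposite direction: it tests the weak formulation of Definition \ref{def1} with a \emph{sublinear} weight $x^\lambda$, $\lambda\in\,]0,1[$. Then $x^\lambda+y^\lambda-(x+y)^\lambda\ge 0$, so coagulation is the dissipative part and fragmentation is the productive part; the boundary terms $\int_0^n x^\lambda f$ at times $0$ and $t$ are bounded by $\|f\|$, and the coagulation contributions are bounded using (A2) by $\max_{s\in[0,t]}\|f(s)\|\cdot\int_0^t[M_\lambda+M_{\lambda+\mu}]\,ds$, which is finite as soon as $\lambda+\mu\le\rho$ (one factor is an order-$\le 1$ moment, hence bounded in $s$; the other is $L^1$ in time by hypothesis). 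This forces the fragmentation production $\int_0^t\int_0^\infty\int_0^x y^\lambda b(x,y)S(x)f(x,s)\,dy\,dx\,ds$ to be finite, and the lower bound (A5) yields
\begin{equation*}
\int_0^x y^\lambda\, b(x,y)S(x)\,dy\;\ge\;\frac{L}{\lambda+1}(1+x)^\nu x^{1+\lambda}\;\ge\;\frac{L}{2(\lambda+1)}\,x^{\nu+\lambda+1}\quad\text{for }x\ge 1,
\end{equation*}
i.e.\ the gain of one full power of $x$ comes from integrating $y^\lambda$ over $(0,x)$, not from a convexity defect. Choosing $\lambda=\rho-\mu$ when $\rho-\mu<1$, and $\lambda=1-\delta$ otherwise, gives exactly the two conclusions of the lemma. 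If you want to keep a superlinear-weight argument you would have to first establish pointwise (not just time-integrated) bounds on moments of order $>1$, which is not available under the hypotheses here.
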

 \begin{proof}
Let us begin with the Definition \ref{def1} (iii), multiply by $x^\lambda$, $\lambda \in ]0,1[$ on the both sides 
and then integrate with respect to $x$ from $0$ to $n$ to get 
\begin{align*}
 \int_{0}^{n}x^{\lambda}[&f(x,t)-f_0(x)]dx \\
=& \int_{0}^{t}\bigg[\frac{1}{2} \int_{0}^{n}\int_{0}^{x} x^{\lambda} K(x-y,y)f(x-y,s)f(y,s)dydx\\
&-\int_{0}^{n}\int_{0}^{\infty} x^{\lambda} K(x,y)f(x,s)f(y,s)dydx\\
&+ \int_{0}^{n}\int_{x}^{\infty} x^{\lambda} b(y,x)S(y)f(y,s)dydx -\int_{0}^{n}x^{\lambda}S(x)f(x,s)dx\bigg]ds.
\end{align*}
Changing the order of integrations in the first and third terms on the right-hand side, substituting $x-y=x'$, $y=y'$ in the first one, 
and then interchanging the roles of $x$ and $y$ in both the terms, we obtain 
% \begin{align*}
%  \int_{0}^{n}x^{\lambda}[&f(x,t)-f_0(x)]dx \\
% =& \int_{0}^{t}\bigg[\frac{1}{2} \int_{0}^{n}\int_{0}^{n-y} (x+y)^{\lambda} K(x,y)f(x,s)f(y,s)dxdy\\
% &-\frac{1}{2}\int_{0}^{n}\int_{0}^{n-x} (x^{\lambda}+y^{\lambda}) K(x,y)f(x,s)f(y,s)dydx\\
% &-\int_{0}^{n}\int_{n-x}^{\infty} x^{\lambda} K(x,y)f(x,s)f(y,s)dydx\\
% &+ \int_{0}^{n}\int_{x}^{n} x^{\lambda} b(y,x)S(y)f(y,s)dydx \\
% &+ \int_{0}^{n}\int_{n}^{\infty} x^{\lambda} b(y,x)S(y)f(y,s)dydx-\int_{0}^{n}x^{\lambda}S(x)f(x,s)dx\bigg]ds.
% \end{align*}
% Changing the order of integrations in the fourth term on the right-hand side and interchanging the roles $x$ 
% and $y$ in the first and fourth terms on the right-hand side, we estimate 
\begin{align}\label{1st eq lambda}
 \int_{0}^{n}x^{\lambda}[&f(x,t)-f_0(x)]dx \nonumber\\
& +\int_{0}^{t}\bigg[\frac{1}{2} \int_{0}^{n}\int_{0}^{n-x} \{x^{\lambda}+y^{\lambda}-(x+y)^{\lambda}\} K(x,y)f(x,s)f(y,s)dydx\nonumber\\
&+\int_{0}^{n}\int_{n-x}^{\infty} x^{\lambda} K(x,y)f(x,s)f(y,s)dydx\bigg]\nonumber\\
=& \int_{0}^{t}\bigg[\int_{0}^{n}\int_{0}^{x} y^{\lambda} b(x,y)S(x)f(x,s)dydx \nonumber\\
&+ \int_{n}^{\infty}\int_{0}^{n}  y^{\lambda} b(x,y)S(x)f(x,s)dydx-\int_{0}^{n}x^{\lambda}S(x)f(x,s)dx\bigg]ds.
\end{align}
Since $0< \lambda <1$ and $f \in X^+$, the first term on the left-hand side in (\ref{1st eq lambda}) is bounded 
independently of $n$ and is convergent as $n \to \infty$.

Let us now estimate the last term on the left-hand side in (\ref{1st eq lambda}) as 
\begin{align*}
 \int_{0}^{t}\int_{0}^{n}\int_{n-x}^{\infty} & x^{\lambda}K(x,y)f(x,s)f(y,s)dydxds\\ 
\leq&\int_{0}^{t}\int_{0}^{\infty}\int_{0}^{\infty}  x^{\lambda}K(x,y)f(x,s)f(y,s)dydxds\\
% \leq & k_1^2\int_{0}^{t}\int_{0}^{\infty}\int_{0}^{\infty}  x^{\lambda}(1+x)^{\mu}(1+y)^{\mu}f(x,s)f(y,s)dxdyds\\
%= & k_1^2\int_{0}^{t}\int_{0}^{\infty}\int_{0}^{1}  x^{\lambda}(1+x)^{\mu}(1+y)^{\mu}f(x,s)f(y,s)dxdyds\\
%&+ k_1^2\int_{0}^{t}\int_{0}^{\infty}\int_{1}^{\infty}  x^{\lambda}(1+x)^{\mu}(1+y)^{\mu}f(x,s)f(y,s)dxdyds\\
\leq & 2^{\mu}k_1^2\int_{0}^{t}\int_{0}^{\infty}\int_{0}^{1}  x^{\lambda}(1+y)f(x,s)f(y,s)dxdyds\\
&+ 2^{\mu} k_1^2\int_{0}^{t}\int_{0}^{\infty}\int_{1}^{\infty}  x^{\lambda+\mu}(1+y)f(x,s)f(y,s)dxdyds.
\end{align*}
In case $\lambda + \mu \leq \rho$ we obtain
\begin{align}
  \int_{0}^{t}\int_{0}^{n}\int_{n-x}^{\infty} & x^{\lambda}K(x,y)f(x,s)f(y,s)dydxds\nonumber\\
& \leq 2^{\mu} k_1^2 \max _{s \in [0,t]} \|f(s)\|\int_{0}^{t}[M_{\lambda}(f(s)) + M_{\rho}(f(s))]ds < \infty.
\end{align}
This shows that the last term on the right-hind side in (\ref{1st eq lambda}) is bounded independently of $n$. 
Therefore, it is convergent as $n \to \infty$.

Next, we consider the second term on the left-hand side of equation (\ref{1st eq lambda}).  Using an important inequality $(x+y)^{\lambda} \leq x^{\lambda}+y^{\lambda}$ for some $\lambda\in [0,1]$, we obtain 
\begin{align*}
\frac{1}{2}\int_{0}^{t}\int_{0}^{n}\int_{0}^{n-x}&\{x^{\lambda}+y^{\lambda}-(x+y)^{\lambda}\}K(x,y)f(x,s)f(y,s)dydxds \\
 %&\leq \frac{1}{2}\int_{0}^{t}\int_{0}^{\infty}\int_{0}^{\infty}\{x^{\lambda}+y^{\lambda}-(x+y)^{\lambda}\}K(x,y)f(x,s)f(y,s)dydxds \\
& \leq \frac{k_1^2}{2}\int_{0}^{t}\int_{0}^{\infty}\int_{0}^{\infty}(x^{\lambda}+y^{\lambda})(1+x)^{\mu}(1+y)^{\mu}f(x,s)f(y,s)dydxds \\
& \leq k_1^2\int_{0}^{t}\int_{0}^{\infty}\int_{0}^{\infty}x^{\lambda}(1+x)^{\mu}(1+y)^{\mu}f(x,s)f(y,s)dydxds \\
& \leq 2^{\mu} k_1^2 \max _{s \in [0,t]} \|f(s)\|\int_{0}^{t}[M_{\lambda}(f(s)) + M_{\rho}(f(s))]ds < \infty,
\end{align*}
  The above estimate implies that the second term on the left-hand side of (\ref{1st eq lambda}) is uniformly bounded which is independent of $n$ and therefore is convergent as $n \to \infty$. Hence, the LHS of equation (\ref{1st eq lambda}) converges as $n \to \infty$. This implies the convergence of the 
RHS in equation (\ref{1st eq lambda}).

% In case $\lambda + \gamma \leq \lambda + \mu \leq \rho$, we have 
% 
% \begin{align*}
%  \int_{0}^{t}\int_{0}^{n}x^{\lambda}S(x)f(x,s)dxds & \leq m \int_{0}^{t}\int_{0}^{n}x^{\lambda}(1+x)^{\gamma}f(x,s)dxds\\
% & \leq m \int_{0}^{t} \bigg[2^\mu \int_{0}^{1}x^{\lambda}f(x,s)dx+ 2^\mu \int_{1}^{n}x^{\lambda+\gamma}f(x,s)dx\bigg]ds\\
% & \leq 2^\mu m \int_{0}^{t} [M_{\lambda}(f(s))+M_{\rho}(f(s))]ds<\infty.
% \end{align*}
Let us estimate the following integral for the last term on the RHS in (\ref{1st eq lambda}) as follows
\begin{align*}
 \int_{0}^{t}\int_{0}^{\infty}x^{\lambda}S(x)f(x,s)dxds & \leq m \int_{0}^{t}\int_{0}^{\infty}x^{\lambda}(1+x)^{1-\lambda}f(x,s)dxds\\
& \leq 2^{1-\lambda} m \int_{0}^{t} \bigg[\int_{0}^{1}x^{\lambda}f(x,s)dx+ \int_{1}^{\infty}xf(x,s)dx\bigg]ds\\
& \leq 2^{1-\lambda} m \int_{0}^{t} [M_{\lambda}(f(s))+M_{1}(f(s))]ds<\infty.
\end{align*}
Therefore, again as before, the last term on the RHS in (\ref{1st eq lambda}) is convergent as $n \to \infty$.
Since the remaining terms on the RHS in (\ref{1st eq lambda}) are non-negative, this implies that 
\begin{eqnarray}\label{term with b}
 \int_{0}^{t}\int_{0}^{\infty}\int_{0}^{x} y^{\lambda} b(x,y)S(x)f(x,s)dydxds < \infty.
\end{eqnarray}
Let us take the integral 
\begin{align*}
\hspace{-1.3cm} \int_{0}^{\infty}\int_{0}^{x} y^{\lambda} b(x,y)S(x)f(x,s)dydx=:\int_{0}^{\infty}R_xf(x,s)dx\ \ \mbox{where}\ \ R_x:= \int_{0}^{x}y^{\lambda} b(x,y)S(x)f(x,s)dy.
\end{align*}
% where 
% \begin{eqnarray*}
%  R_x:= \int_{0}^{x}y^{\lambda} b(x,y)S(x)f(x,s)dy.
% \end{eqnarray*}
Since $R_x$ is non-negative due to (A3). Then by using (A5), we have \
\begin{align}\label{Rx}
R_x &\geq L \int_{0}^{x} y^{\lambda} (1+x)^{\nu} dy= \frac{L}{\lambda+1}(1+x)^\nu x^{1+\lambda}\nonumber\\
&= \frac{L}{\lambda+1} \frac{(1+x)^{\nu+1}}{(\frac{1}{x}+1)}x^{\lambda}\geq \frac{L}{2(\lambda+1)}x^{\nu+ \lambda + 1} \hspace{.2cm} \mbox{for any} \hspace{.2cm} x\geq 1.
\end{align}
Substituting (\ref{Rx}) for $R_x$ and then into (\ref{term with b}), we obtain
\begin{align*}
 \frac{L}{2(\lambda+1)}\int_{0}^{t}\int_{0}^{\infty}x^{\nu+ \lambda + 1}f(x,s)dxds 
\leq \int_{0}^{t}\int_{0}^{\infty}\int_{0}^{x} y^{\lambda} b(x,y)S(x)f(x,s)dydxds < \infty.
\end{align*}
There are two cases. For $\rho - \mu <1$, we may take maximal $\lambda=\rho-\mu$ to give $I_{\rho + \nu - \mu+1}(t)< \infty$.
Otherwise, the condition $\lambda <1$ is more restrictive, i.e.\ we may take $\lambda=1-\delta$ for any $\delta>0$. 
This gives $I_{2+\nu-\delta}(t) < \infty$.
\end{proof}
\begin{proof} [Proof of Theorem~\ref{integrability thm}]
This can be proved by using the repeated application of Lemma \ref{lemma for integrability} as in \cite[Chapter 3, page 40]{Giri:thesis} and \cite{Costa:1995}.
%  Let $p$ be the smallest positive integer satisfying
% $$p(r-\mu)>\mu \hspace{.2cm} \mbox{where} \hspace{.2cm} r:=1+\nu>\mu.$$
% (I). If $p>1$. Then $p-1$ is a positive integer and 
% $$0<(p-1)(r-\mu)<\mu.$$
% Now we define $\rho_{i}:=1+(i-1)(r-\mu)$ to have 
% $$1=\rho_1<\rho_2<\ldots<\rho_{p-1}<\rho_p<1+\mu.$$
% Applying Lemma \ref{lemma for integrability} $p$ times, starting with $\rho=\rho_1=1$, gives $I_{\rho_{p+1}}(t) < \infty$, 
% and one more application of Lemma \ref{lemma for integrability} with $\rho=\rho_{p+1}=1+p(r-\mu)>1$ proves the result. 
% 
% (II). If $p=1$. Then we start with $\rho=1$ and apply Lemma \ref{lemma for integrability} two times to obtain the desired 
% result.
\end{proof}
Now we prove the main result of the paper as follows.
\begin{proof}[Proof of Theorem~\ref{unique theorem}]
Let $f$ and $g$ be two solutions to (\ref{cfe1})-(\ref{in1}) on $[0,T[$ where $T>0$, with $f(0)=g(0)$, and set $Y=f-g$. For $n=1,2,3\ldots$ we define
\begin{eqnarray*}
u^n(t)=\int_{0}^{n}(1+x)|Y(x,t)|dx.
\end{eqnarray*}
Multiplying $|Y|$ by $(1+x)$, applying Fubini's Theorem to Definition \ref{def1} (iii) and the substitution $x'=x-y$, $y'=y$ in the first integral 
on the right-hand side, we obtain, for each $n$ and $0<t<T$,
% \begin{align}\label{value substitution}
% u^n(t)=\int_{0}^{t}\int_{0}^{n}&(1+x)\text{sgn}(Y(x,s))\nonumber\\
% &\times\bigg[\frac{1}{2}\int_{0}^{x}K(x-y,y)\{f(x-y,s)f(y,s)-g(x-y,s)g(y,s)\}dy\nonumber\\
% &-\int_{0}^{\infty}K(x,y)\{f(x,s)f(y,s)-g(x,s)g(y,s)\}dy\nonumber\\
% &+\int_{x}^{\infty}b(y,x)S(y)\{f(y,s)-g(y,s)\}dy-S(x)\{f(x,s)-g(x,s)\}\bigg]dxds.
% \end{align}
%Using the substitution $x'=x-y$, $y'=y$ in the first integral on the right-hand side of (\ref{value substitution}) we find that
\begin{align}\label{variable substitution}
u^n(t)=\int_{0}^{t}\int_{0}^{n}\int_{0}^{n-x}&\bigg[\frac{1}{2}(1+x+y)\text{sgn}(Y(x+y,s))-(1+x)\text{sgn}(Y(x,s))\bigg]\nonumber\\
       &\times K(x,y)\{f(x,s)f(y,s)-g(x,s)g(y,s)\}dydxds\nonumber\\
       -\int_{0}^{t}\int_{0}^{n}\int_{n-x}^{\infty}&(1+x)\text{sgn}(Y(x,s))K(x,y)\nonumber\\
&\times\{f(x,s)f(y,s)-g(x,s)g(y,s)\}dydxds\nonumber\\
       +\int_{0}^{t}\int_{0}^{n}\int_{x}^{\infty}&(1+x)\text{sgn}(Y(x,s))b(y,x)S(y)\{f(y,s)-g(y,s)\}dydxds\nonumber\\
       -\int_{0}^{t}\int_{0}^{n}&(1+x)\text{sgn}(Y(x,s))S(x)\{f(x,s)-g(x,s)\}dxds.
\end{align}
By interchanging the order of integration and interchanging the roles of $x$ and $y$, the symmetry of $K$ yields the identity
\begin{align}\label{intechanging}
\int_{0}^{n}&\int_{0}^{n-x}(1+x)\text{sgn}(Y(x,s))K(x,y)\{f(x,s)f(y,s)-g(x,s)g(y,s)\}dydx\nonumber\\
&=\int_{0}^{n}\int_{0}^{n-x}(1+y)\text{sgn}(Y(y,s))K(x,y)\{f(x,s)f(y,s)-g(x,s)g(y,s)\}dydx.
\end{align}
For $x,y>0$ and $t\in[0,T[$ we define the function $r$ by
\begin{align*}
r(x,y,t)=(1+x+y)\text{sgn}(Y(x+y,t))-(1+x)&\text{sgn}(Y(x,t))\\
&-(1+y)\text{sgn}(Y(y,t)).
\end{align*}
Using (\ref{intechanging}) we can show that (\ref{variable substitution}) can be rewritten as
\begin{align}\label{rewritten form}
u^n(t)=\frac{1}{2}&\int_{0}^{t}\int_{0}^{n}\int_{0}^{n-x}r(x,y,s)K(x,y)f(x,s)Y(y,s)dydxds\nonumber\\
+&\frac{1}{2}\int_{0}^{t}\int_{0}^{n}\int_{0}^{n-x}r(x,y,s)K(x,y)g(y,s)Y(x,s)dydxds\nonumber\\
+&\int_{0}^{t}\int_{0}^{n}\int_{x}^{\infty}x\text{sgn}(Y(x,s))b(y,x)S(y)Y(y,s)dydxds\nonumber\\
-&\int_{0}^{t}\int_{0}^{n}x\text{sgn}(Y(x,s))S(x)Y(x,s)dxds\nonumber\\
+&\int_{0}^{t}\int_{0}^{n}\int_{x}^{\infty}\text{sgn}(Y(x,s))b(y,x)S(y)Y(y,s)dydxds\nonumber\\
-&\int_{0}^{t}\int_{0}^{n}\text{sgn}(Y(x,s))S(x)Y(x,s)dxds\nonumber\\
-&\int_{0}^{t}\int_{0}^{n}\int_{n-x}^{\infty}(1+x)\text{sgn}(Y(x,s))K(x,y)\nonumber\\
&\times\{f(x,s)Y(y,s)+g(y,s)Y(x,s)\}dydxds.
\end{align}
Since the sixth integral and the last term in the seventh integral on the right-hand side of (\ref{rewritten form}) are non-negative, we may omit them. Thus we obtain, by interchanging the order of integration for the fifth integral,
\begin{align}\label{main equation}
u^n(t)\leq &\frac{1}{2}\int_{0}^{t}\int_{0}^{n}\int_{0}^{n-x}r(x,y,s)K(x,y)f(x,s)Y(y,s)dydxds\nonumber\\
&+\frac{1}{2}\int_{0}^{t}\int_{0}^{n}\int_{0}^{n-x}r(x,y,s)K(x,y)g(y,s)Y(x,s)dydxds\nonumber\\
&+\int_{0}^{t}\int_{0}^{n}\int_{x}^{\infty}xb(y,x)S(y)|Y(y,s)|dydxds\nonumber\\
&-\int_{0}^{t}\int_{0}^{n}xS(x)|Y(x,s)|dxds\nonumber\\
&+\int_{0}^{t}\int_{0}^{n}\int_{0}^{y}b(y,x)S(y)|Y(y,s)|dxdyds\nonumber\\
&+\int_{0}^{t}\int_{n}^{\infty}\int_{0}^{n}b(y,x)S(y)|Y(y,s)|dxdyds\nonumber\\
&-\int_{0}^{t}\int_{0}^{n}\int_{n-x}^{\infty}(1+x)\text{sgn}(Y(x,s))K(x,y)f(x,s)Y(y,s)dydxds\nonumber\\
=:&\int_{0}^{t}\sum_{i=1}^{7}S_{i}^n(s)ds.
\end{align}
Here $S_i^n$, for $i=1,\ldots 7,$ are the corresponding integrands in the preceding lines.\\
We now consider each $S_i^n$ individually. Noting that for all $q,q_1, q_2 \in \mathbb{R}$, we have $\text{sgn}(q_1) \text{sgn}(q_2)= \text{sgn}(q_{1} q_{2})$ and $|q|=q \text{sgn}(q)$.
We find that
\begin{align}\label{ineq}
r(x,y,s)Y(y,s)\leq [(1+x+y)+(1+x)-(1+y)]|Y(y,s)|\leq 2(1+x)|Y(y,s)|.
\end{align}
By using (A2) and (\ref{ineq}), let us consider the integral with $S_1^n$ in (\ref{main equation}).
\begin{align*}
\int_{0}^{t}S_1^n(s)ds%&=\frac{1}{2}\int_{0}^{t}\int_{0}^{n}\int_{0}^{n-x}r(x,y,s)K(x,y)f(x,s)Y(y,s)dydxds\nonumber\\
                      &\leq k_1^2\int_{0}^{t}\int_{0}^{n}\int_{0}^{n-x}(1+x)^{1+\mu}(1+y)^{\mu}f(x,s)|Y(y,s)|dydxds\nonumber\\
                      &\leq k_1^2\int_{0}^{t}\bigg[\int_{0}^{1}(1+x)^{1+\mu}f(x,s)dx+\int_{1}^{n}x^{1+\mu}(\frac{1}{x}+1)^{1+\mu}f(x,s)dx\bigg]u^n(s)ds\nonumber\\
                      &=2^{1+\mu}k_1^2\int_{0}^{t}[M_{0}(f(s))+ M_{1+\mu}(f(s))]u^n(s)ds\nonumber\\
                      &=\int_{0}^{t}\varphi_f(s)u^n(s)ds
\end{align*}
where $\varphi_f(s):=2^{1+\mu}k_1^2[M_{0}(f(s))+ M_{1+\mu}(f(s))]$.
Similarly, by defining $\varphi_g(s):=2^{1+\mu}k_1^2 [M_{0}(g(s))+M_{1+\mu}(g(s))]$, we estimate
\begin{eqnarray*}
\int_{0}^{t}S_{2}^n(s)ds\leq \int_{0}^{t}\varphi_g(s)u^n(s)ds.
\end{eqnarray*}
Next, by using (\ref{mass1}) in the integral with $S_4^n$, we calculate third and fourth integrals in (\ref{main equation}) together. Then 
we interchange the roles of $x$ and $y$, and change the order of integrations, respectively, in the term with $S_4^n$ to obtain 
% \begin{align}\label{S3S4}
% \int_{0}^{t} (S_3^n(s)+ S_4^n(s))ds=&\int_{0}^{t}\int_{0}^{n}\int_{x}^{\infty}xb(y,x)S(y)|Y(y,s)|dydxds\nonumber\\
% &-\int_{0}^{t}\int_{0}^{n}\int_{0}^{x}yb(x,y)S(x)|Y(x,s)|dydxds.
% \end{align}
% Interchanging the roles of $x$ and $y$, and changing the order of integrations, respectively, in the second term on the right-hand side of (\ref{S3S4}), we obtain
\begin{align}\label{S3S42}
\int_{0}^{t} (S_3^n(s)+ S_4^n(s))ds=&\int_{0}^{t}\int_{0}^{n}\int_{x}^{\infty}xb(y,x)S(y)|Y(y,s)|dydxds\nonumber\\
&-\int_{0}^{t}\int_{0}^{n}\int_{x}^{n}xb(y,x)S(y)|Y(y,s)|dydxds.
\end{align}

Let us evaluate the following integral by using (\ref{mass1}) and (A4).
\begin{align*}
\int_{0}^{t}\int_{0}^{\infty}\int_{x}^{\infty}xb(y,x)S(y)&|Y(y,s)|dydx\\ =&\int_{0}^{t}\int_{0}^{\infty}\int_{0}^{y}xb(y,x)S(y)|Y(y,s)|dxdy ds\\
%\leq& m\int_{0}^{t}\int_{0}^{\infty}y(1+y)^{1-\lambda}\{f(y,s)+g(y,s)\}dyds\\
\leq &m \int_{0}^{t}\bigg[\int_{0}^{1}y(1+y)^{1-\lambda}\{f(y,s)+g(y,s)\}dy \\ 
&\hspace{.5cm} +\int_{1}^{\infty} y^{2-\lambda}(1+1/y)^{1-\lambda}\{f(y,s)+g(y,s)\}dy\bigg]ds\\
\leq &2^{1-\lambda} m\int_{0}^{t}[\{M_1(f(s)) + M_1(g(s))\}\\
&\hspace{.6cm}+ \{M_{2-\lambda}(f(s)) + M_{2-\lambda}(g(s))\}] ds< \infty.
\end{align*}
Note that we have used the integrability of higher moments of $f$ and $g$ from Theorem \ref{integrability thm}. Therefore, from (\ref{S3S42}) and the finiteness of the above integral, we have 
\begin{eqnarray}\label{S3S4_conv}
\int_{0}^{t} (S_3^n(s)+ S_4^n(s))ds \to 0 \hspace{.5cm} \mbox{as} \hspace{.5cm} n \to \infty.
\end{eqnarray}
Now let us consider the integral with $S_5^n$ in (\ref{main equation}).
% \begin{eqnarray}\label{S3}
%  \int_{0}^{t}S_5^n(s)ds=\int_{0}^{t}\int_{0}^{n}\int_{0}^{y}b(y,x)S(y)|Y(y,s)|dxdyds.
% \end{eqnarray}
By interchanging the roles of $x$ and $y$, and using (\ref{N1}) and (A4), we obtain 
\begin{align*}
 \hspace{-.4cm}\int_{0}^{t}S_5^n(s)ds&=\int_{0}^{t}\int_{0}^{n}\int_{0}^{x}b(x,y)S(x)|Y(x,s)|dydxds\leq mN\int_{0}^{t}\int_{0}^{n} (1+x)^{1-\lambda} |Y(x,s)|dxds\nonumber\\
& \leq L \int_{0}^{t}u^n(s)ds, \hspace{.2cm} \mbox{where} \hspace{.2cm} L=mN.
\end{align*}
Thus, we estimate
\begin{eqnarray}\label{main1}
\int_{0}^{t}\bigg[S_{1}^n(s)+S_{2}^n(s)+S_{5}^n(s)\bigg]ds \leq \int_{0}^{t}\varphi(s)u^n(s)ds
\end{eqnarray}
where $\varphi(s)=\varphi_f(s)+\varphi_g(s)+L$ is integrable by Theorem \ref{integrability thm}.

Next, to solve the integral with $S_6^n$ in (\ref{main equation}), we interchange the roles of $x$ and $y$, 
and then use (\ref{N1}) and (A4) to estimate the following integral for each $s \in [0,t]$
\begin{align}\label{S6}
 %\int_{0}^{n}\int_{n}^{\infty}&b(y,x)S(y)|Y(y,s)|dydx \nonumber\\
\int_{n}^{\infty}\int_{0}^{n}b(x,y)S(x)|Y(x,s)|dydx 
%&\leq \int_{n}^{\infty}\int_{0}^{x}b(x,y)S(x)[f(x,s)+g(x,s)]dydx \nonumber\\
 \leq mN \int_{n}^{\infty} (1+x)[f(x,s)+g(x,s)]dydx.
\end{align}
The right-hand side of (\ref{S6}) is always bounded by the constant $mN \sup_{s\in[0,t]}[\|f(s)\|+\|g(s)\|]$ 
and therefore the dominated convergence theorem leads to 
\begin{eqnarray}\label{S6_lim}
 \int_{0}^{t}S_6^n(s)ds \to 0 \hspace{.2cm} as \hspace{.2cm} n\to \infty.
\end{eqnarray}
To consider the integral with $S_7^n$ in (\ref{main equation}), by using (A2), we first observe that
\begin{align*}
\bigg|\int_{0}^{t}S_7^n(s)ds\bigg| %&\leq \bigg|\int_{0}^{t}\int_{0}^{\infty}\int_{0}^{\infty}(1+x)\text{sgn}(Y(x,s))K(x,y)f(x,s)Y(y,s)dydxds\bigg|\\
&\leq k_1^2\int_{0}^{t}\int_{0}^{\infty}\int_{0}^{\infty}(1+x)^{1+\mu}(1+y)^{\mu}f(x,s)|Y(y,s)|dydx\\
&\leq k_1^2\sup_{s \in [0,t]}[\|f(s)\|+\|g(s)\|]\\
&\hspace{.2cm}\times\int_{0}^{t}\bigg[\int_{0}^{1}(1+x)^{1+\mu}f(x,s)dx+\int_{1}^{\infty}(\frac{1}{x}+1)^{1+\mu}x^{1+\mu}f(x,s)dx\bigg]ds\\
& \leq 2^{1+\mu}k_1^2\sup_{s \in [0,t]}[\|f(s)\|+\|g(s)\|]\int_{0}^{t}(M_{0}(f(s))+M_{1+\mu}(f(s)))ds< \infty.
\end{align*}
Thus, by Lemma 1.2 in \cite{Giri:2011II} we obtain
\begin{eqnarray}\label{S7}
\int_{0}^{t}S_7^n(s)ds\to 0 \ \ \text{as}\ \ n\to \infty.
\end{eqnarray}
The sequence $u^n$ is bounded and monotone. Thus, from (\ref{S3S4_conv}), (\ref{main1}), (\ref{S6_lim}) and (\ref{S7}) we obtain
\begin{align*}
u(t):&=\int_{0}^{\infty}(1+x)|Y(x,t)|dx=\lim_{n\to \infty}u^n(t)\\
&\leq \lim_{n\to \infty}\int_{0}^{t}\varphi(s)u^n(s)ds +\lim_{n\to \infty}\int_{0}^{t}[S_{3}^n(s)+S_{4}^n(s)]ds
+\lim_{n\to \infty}\int_{0}^{t}[S_{6}^n(s)+S_{7}^n(s)]ds\\
&=\int_{0}^{t}\varphi(s)\int_{0}^{\infty}(1+x)|Y(x,s)|dxds,
\end{align*}
which can be rewritten as
\begin{eqnarray*}
u(t)\leq \int_{0}^{t}\varphi(s)u(s)ds
\end{eqnarray*}
with $\varphi(s)\geq 0$. Then an application of Gronwall's inequality gives
\begin{eqnarray*}
u(t)=\int_{0}^{\infty}(1+x)|Y(x,t)|dx = 0 \ \ \mbox{for all}\ \ t\in[0,T).
\end{eqnarray*}
Therefore, we have
\begin{eqnarray*}
f(x,t)=g(x,t)\ \ \mbox{for a.e.}\ \ x\in\mathbb{R}_{\geq 0}.
\end{eqnarray*}
\end{proof}

 \section*{Acknowledgements}
 The author wants to thank a reviewer for his comments and suggestions that helped to improve the results in the manuscript.

 \end{document}